\documentclass[a4paper]{amsart}
\oddsidemargin 0mm
\evensidemargin 0mm
\topmargin 10mm
\textwidth 160mm
\textheight 230mm
\tolerance=9999
\usepackage[latin1]{inputenc}
\usepackage{amssymb}
\usepackage{amsmath}
\usepackage{mathrsfs}
\usepackage{eufrak}
\usepackage{amsthm}
\usepackage{amsfonts}
\usepackage{textcomp}
\usepackage{graphicx}
\usepackage[pdftex]{color}
\usepackage{paralist}
\usepackage[shortlabels]{enumitem}
\usepackage{hyperref}
\usepackage{comment}
\usepackage[arrow, matrix, curve]{xy}
\usepackage{tikz-cd}

\newtheorem*{corollary*}{Corollary}
\newtheorem*{conjecture*}{Conjecture}
\newtheorem*{example*}{Example}
\newtheorem*{theorem*}{Theorem}
\newtheorem*{proposition*}{Proposition}

\newtheorem{theorem}{Theorem}[section]

\newtheorem{proposition}[theorem]{Proposition}

\newtheorem*{claim*}{Claim}

\newtheorem*{question}{Question}

\theoremstyle{definition}

\theoremstyle{remark}

\numberwithin{equation}{section}

\makeatletter
\renewcommand*\env@matrix[1][\
arraystretch]{%
  \edef\arraystretch{#1}%
  \hskip -\arraycolsep
  \let\@ifnextchar\new@ifnextchar
  \array{*\c@MaxMatrixCols c}}
\makeatother

%%%%%%%%%%%%%%%%%%%%%%%%%%%%%%%%%%%%%%%%%%%%%%%%%%%%%%%%%%%%%

\newcommand{\Ext}{\operatorname{Ext}}

\newcommand{\Tr}{\operatorname{Tr}}
\newcommand{\Hom}{\operatorname{Hom}}

\newcommand{\rad}{\operatorname{\mathrm{rad}}}

%%%%%%%%%%%%%%%%%%%%%%%%%%%%%%%%%%%%%%%%%%%%%%%%%%%%%%%%%%%%%
\begin{document}

\title{On a question of Auslander and Bridger on 2-reflexive modules}
\date{\today}

\subjclass[2010]{Primary 16G10, 16E10}

\keywords{2-reflexive modules, Ext-modules, projective modules}

\author[Marczinzik]{Ren\'{e} Marczinzik}
\address[Marczinzik]{Mathematical Institute of the University of Bonn, Endenicher Allee 60, 53115 Bonn, Germany}
\email{marczire@math.uni-bonn.de}

\begin{abstract}
We answer a question raised by Auslander and Bridger by showing that not every 2-reflexive module is reflexive.
\end{abstract}

\maketitle

Let $R$ be a two-sided noetherian ring and $M$ a finitely generated $R$-module.
$M$ is called \emph{reflexive} if the canonical evaluation map $\operatorname{ev}_M: M \rightarrow M^{**}$ given by $\operatorname{ev}_M(m)(\psi)=\psi(m)$ with $m \in M, \psi \in M^{*}$ is an isomorphism, where $N^*:=\Hom_R(N,R)$ for an $R$-module $N$. The study of reflexive modules is a classical topic in ring theory and algebraic geometry, we refer for example to \cite{E,EK,CM}.
Let $P_1 \xrightarrow{f} P_0 \rightarrow M \rightarrow 0$ denote a projective presentation of an $R$-module $M$, then the \emph{Auslander-Bridger transpose} $\Tr M$ is defined as the cokernel of the map $\Hom_R(f,R)$ and we denote by $D_k(M)$ the module $\Omega^k \Tr(M)$.
Following Auslander and Bridger \cite[Page 3]{AB}, a module $M$ is said to be $k$-reflexive for $k \geq 0$ when the natural map $M \rightarrow D_k^2(M)$ induces an isomorphism $\Ext_R^1(D_k^2(M),-) \cong \Ext_R^1(M,-)$, we refer to \cite[Chapter 2]{AB} for the precise definition of this natural map and more information on $k$-reflexive modules. 
The notion of $k$-reflexive modules can for example be used to characterise Gorenstein commutative rings, see for example  \cite[Corollary 4.22]{AB}.

In the proposition on page 144 in \cite{AB}, Auslander and Bridger proved:
\begin{proposition}
Let $R$ be a two-sided noetherian ring. Then the following are equivalent:
\begin{enumerate}
    \item Each finitely generated left $R$-module which is 2-reflexive is reflexive.
    \item If for a finitely generated right $R$-module $N$ we have that $\Ext_R^2(N,R)$ is projective, then $\Ext_R^2(N,R)$ is zero.
\end{enumerate}
\end{proposition}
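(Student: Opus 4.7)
The plan is to translate both conditions into vanishing and projectivity statements for the Ext-modules $\Ext^i_R(\Tr M, R)$, using the Auslander--Bridger fundamental 4-term exact sequence
$$0 \to \Ext^1_R(\Tr M, R) \to M \to M^{**} \to \Ext^2_R(\Tr M, R) \to 0,$$
by which $M$ is reflexive precisely when both outer terms vanish.

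The first main step is a parallel characterization of $2$-reflexivity: I would show that $M$ is $2$-reflexive if and only if $\Ext^1_R(\Tr M, R) = 0$ and $\Ext^2_R(\Tr M, R)$ is projective. The key point is that $D_2(M) = \Omega^2 \Tr M$ is a second syzygy, hence reflexive, so $D_2^2(M)$ is reflexive too; analysing the natural map $M \to D_2^2(M)$ via a $4$-term exact sequence parallel to the fundamental one shows that it induces an isomorphism on $\Ext^1_R(-,X)$ precisely when its kernel vanishes and its cokernel is projective, which translate into the claimed Ext-conditions on $\Tr M$. This is essentially the content of \cite[Chapter 2]{AB} and is the main technical hurdle.

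Given the characterization, the implication $(2) \Rightarrow (1)$ is essentially formal: if $M$ is $2$-reflexive, then applying $(2)$ with $N = \Tr M$ forces $\Ext^2_R(\Tr M, R) = 0$, and the fundamental sequence then gives $M \cong M^{**}$.

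For $(1) \Rightarrow (2)$, let $N$ be a finitely generated right $R$-module with $\Ext^2_R(N, R)$ projective. The naive choice $M = \Tr N$ gives $\Ext^2_R(\Tr M, R) \cong \Ext^2_R(N, R)$ projective, but $\Ext^1_R(\Tr M, R) \cong \Ext^1_R(N, R)$ may be nonzero, so $M$ may fail to be $2$-reflexive. The strategy is to replace $M$ by a $1$-torsionless modification (for instance its image in $M^{**}$, or a carefully chosen syzygy) that is $2$-reflexive and whose reflexivity still forces $\Ext^2_R(N, R) = 0$; applying $(1)$ to this modification then completes the proof. Carrying out this construction while preserving the relevant Ext-information is the second main obstacle.
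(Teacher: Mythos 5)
First, a point of order: the paper does not prove this proposition at all --- it is quoted verbatim from the proposition on page 144 of \cite{AB}, so there is no in-paper argument to compare yours against. Judged on its own terms, your outline identifies the correct skeleton of Auslander and Bridger's argument: the four-term exact sequence
$$0 \rightarrow \Ext_R^1(\Tr M,R) \rightarrow M \rightarrow M^{**} \rightarrow \Ext_R^2(\Tr M,R) \rightarrow 0,$$
the translation of $2$-reflexivity into ``$\Ext_R^1(\Tr M,R)=0$ and $\Ext_R^2(\Tr M,R)$ projective'', and the need in $(1)\Rightarrow(2)$ to replace an arbitrary $N$ by a torsionless module. But both steps you flag as ``obstacles'' are left unproved, and they are not of comparable difficulty.

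The characterization of $2$-reflexivity is the real content of the statement: it amounts to the proposition on page 143 of \cite{AB} (which this paper also invokes, in the form ``a $1$-torsionfree module is $2$-reflexive iff $\Ext_R^2(\Tr M,R)$ is projective'') together with the fact that a $2$-reflexive module is torsionless. Declaring it ``essentially the content of \cite[Chapter 2]{AB}'' is a citation, not a proof; either cite it precisely or actually analyse when $M\rightarrow D_2^2(M)$ induces an isomorphism on $\Ext^1(-,X)$, which is where all the work lies. Your second ``obstacle'', by contrast, is not really one, and you should not leave it as a vague ``strategy'': your first suggestion already works. Take $M=\Tr N$ (so that, up to projective summands which do not affect $\Ext^2$, $\Tr M\cong N$) and let $M'=\operatorname{im}(\operatorname{ev}_M)\subseteq M^{**}$. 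Since $\ker(\operatorname{ev}_M)$ is exactly the common kernel of all functionals in $M^*$, every such functional factors through the quotient $M'$, so $(M')^*\cong M^*$, $(M')^{**}\cong M^{**}$, and $\operatorname{ev}_{M'}$ is the inclusion $M'\hookrightarrow M^{**}$. Hence $\Ext_R^1(\Tr M',R)=\ker(\operatorname{ev}_{M'})=0$ and $\Ext_R^2(\Tr M',R)=\operatorname{coker}(\operatorname{ev}_{M'})\cong\operatorname{coker}(\operatorname{ev}_M)\cong\Ext_R^2(N,R)$, which is projective; so $M'$ is $2$-reflexive, condition $(1)$ forces it to be reflexive, and therefore $\Ext_R^2(N,R)=0$. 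Writing this out closes the second gap completely; the first remains the substantive one.
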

Auslander and Bridger raised the following question in the remark of page 145 in \cite{AB}:
\begin{question}
Let $R$ be a two-sided noetherian ring and $N$ a finitely generated $R$-module. Can $\Ext_R^2(N,R)$ be non-zero projective?
\end{question}
It seems that this question did not find an answer yet in the literature in general and by the previous proposition this question is equivalent to the question whether the notions of reflexive and 2-reflexive modules coincide. It was proven for commutative $R$ in the proposition on page 145 of \cite{AB} that $\Ext_R^2(N,R)$ can never be projective if it is non-zero and in for commutative rings 2-reflexive modules and the classical reflexive modules coincide.

The goal of this article is to give a first example of a finitely generated $R$-module $N$ with $\Ext_R^2(N,R)$ non-zero projective and a first example of a 2-reflexive module that is not reflexive. The example will even be a finite dimensional quiver algebra and thus in particular an artinian ring. We refer the reader for example to the textbook \cite{ASS} for basics on the representation theory of quiver algebras.
\begin{proposition} \label{mainresult}
    Let $Q$ be the quiver 
% https://q.uiver.app/#q=WzAsNCxbMCwwLCIxIl0sWzEsMCwiMiJdLFsyLDAsIjMiXSxbMywwLCI0Il0sWzAsMSwiYSJdLFsxLDIsImIiXSxbMiwzLCJjIl0sWzAsMiwiZCIsMix7Im9mZnNldCI6NX1dXQ==
\[\begin{tikzcd}
	1 & 2 & 3 & 4
	\arrow["a", from=1-1, to=1-2]
	\arrow["d"', shift right=5, from=1-1, to=1-3]
	\arrow["b", from=1-2, to=1-3]
	\arrow["c", from=1-3, to=1-4]
\end{tikzcd}\]
and let $R=KQ/\rad^2(KQ)$ the radical square zero algebra with quiver $Q$. Denote by $S_i$ the simple $R$-modules.

\begin{enumerate}
    \item The indecomposable $R$-module $N$ with dimension vector $[1,0,1,0]$ satisfies that $\Ext_R^2(N,R)$ is a non-zero projective module.
    \item The simple $R$-module $S_3$ is 2-reflexive but not reflexive.
\end{enumerate}
\end{proposition}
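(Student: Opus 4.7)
The strategy is to exploit the explicit combinatorial structure of a radical square zero algebra and invoke the classical Auslander-Bridger exact sequence
\[
0 \to \Ext_R^1(\Tr M, R) \to M \to M^{**} \to \Ext_R^2(\Tr M, R) \to 0
\]
for any finitely generated $M$. Part~(1) is a direct Ext-computation; part~(2) reduces, via this exact sequence, to an Ext-computation of the same flavour applied to $\Tr S_3$.

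For part~(1), record that the indecomposable projective left $R$-modules $P(1),P(2),P(3),P(4)$ have dimension vectors $[1,1,1,0]$, $[0,1,1,0]$, $[0,0,1,1]$, $[0,0,0,1]$. Since $N$ has top $S_1$ and dimension vector $[1,0,1,0]$, its projective cover is $P(1)$ with $\Omega N = S_2$; iterating gives the minimal projective resolution
\[
0 \to P(4) \to P(3) \to P(2) \to P(1) \to N \to 0.
\]
Applying $\Hom_R(-,R)$ produces the complex $e_1R \to e_2R \to e_3R \to e_4R$ of indecomposable projective right $R$-modules whose differentials are left multiplication by $a$, $b$, $c$. A direct calculation identifies the cohomology at $e_3R$ as $\rad(e_3R)/Kb$, a one-dimensional right module supported only at vertex~$1$, hence isomorphic to $S_1^{\op}$. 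Since vertex $1$ is a source of $Q$, the simple $S_1^{\op}$ coincides with the projective $e_1R$, giving $\Ext_R^2(N,R) \cong e_1R$, a non-zero projective.

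For part~(2), compute $\Tr S_3$ from the minimal presentation $P(4) \to P(3) \to S_3 \to 0$: applying $\Hom_R(-,R)$ yields $\Tr S_3 \cong e_4R/Kc \cong S_4^{\op}$. Using $\Omega S_4^{\op} = S_3^{\op}$ and $\Omega^2 S_4^{\op} = S_1^{\op} \oplus S_2^{\op}$, one writes down the minimal resolution
\[
0 \to e_1R \to e_1R \oplus e_2R \to e_3R \to e_4R \to S_4^{\op} \to 0,
\]
dualises, and computes the cohomology of the resulting complex $Re_4 \to Re_3 \to Re_1 \oplus Re_2 \to Re_1$ to find $\Ext_R^1(\Tr S_3, R) = 0$ and $\Ext_R^2(\Tr S_3, R) \cong P(1)$. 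Feeding this into the Auslander-Bridger sequence shows that $S_3 \to S_3^{**}$ is injective with projective cokernel $P(1) \neq 0$, so $S_3$ is not reflexive. Because that cokernel is projective, the evaluation induces an isomorphism $\Ext_R^1(S_3^{**},-) \to \Ext_R^1(S_3,-)$; via the standard identification of $M^{**}$ with $D_2^2 M$ up to a projective summand, this translates into the corresponding isomorphism for $S_3 \to D_2^2 S_3$, which is exactly the paper's definition of $2$-reflexivity.

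The main obstacle is the explicit identification $\Ext_R^2(\Tr S_3,R) \cong P(1)$ with its specific module structure, which requires tracking the maps carefully through the dualised resolution of $S_4^{\op}$; the remaining conceptual point is the translation between the $D_k^2$-formulation of $k$-reflexivity used in the paper and the characterisation via $M \to M^{**}$, which is what allows the projectivity (rather than the vanishing) of the cokernel to suffice for $2$-reflexivity.
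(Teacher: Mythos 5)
Your proposal is correct, and its computational core coincides with the paper's: both arguments run the explicit minimal projective resolutions over this radical--square--zero algebra and identify $\Ext_R^2(N,R)$ with the simple projective left module $Re_1$ and $\Ext_R^2(\Tr S_3,R)$ with the right projective $e_1R$. The one genuine difference is the packaging of part (2). The paper observes that $S_3\subseteq e_2R$ is $1$-torsionfree and then quotes Auslander--Bridger's criterion (proposition on p.~143 of [AB]) that for such modules $2$-reflexivity is equivalent to projectivity of $\Ext_R^2(\Tr S_3,R)$, together with [ARS, IV.3.3] for non-reflexivity; you instead compute $\Ext_R^1(\Tr S_3,R)=0$ from the full resolution of $\Tr S_3\cong S_4^{\op}$ and feed both Ext groups into the four-term exact sequence, obtaining $0\to S_3\to S_3^{**}\to e_1R\to 0$, which splits because $e_1R$ is projective and hence yields both non-reflexivity and the isomorphism $\Ext_R^1(S_3^{**},-)\cong\Ext_R^1(S_3,-)$. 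This buys a more self-contained argument at the cost of one extra syzygy computation, but note that your final step --- transporting that isomorphism to the map $S_3\to D_2^2S_3$ via the stable identification of $D_2^2M$ with $M^{**}$ --- is precisely the content of the AB proposition the paper cites, so you should still reference it rather than call it standard. One caution on bookkeeping: the dual of the right projective $e_iR$ is the left projective $Re_i$, so your dualised complex in part (1) should read $Re_1\to Re_2\to Re_3\to Re_4$ with differentials given by right multiplication by $a,b,c$, and $\Ext_R^2(N,R)$ is the one-dimensional left module $Re_1$ (simple because vertex $1$ is a source), not the three-dimensional right module $e_1R$; the analogous left/right swap recurs in part (2). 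None of this affects the validity of your computations, which all check out.
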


\begin{proof}
Let $e_i$ denote the primitive idempotents corresponding to the vertices $i$ in the quiver of $R$. \newline
(1) Note that we have $\Omega^1(N) \cong S_2$ and
we have the exact sequence 
$$0 \rightarrow S_3 \rightarrow e_2 R \rightarrow S_2 \rightarrow 0$$
and applying the functor $\Hom_R(-,R)$ to this short exact sequence we obtain the exact sequence
$$0 \rightarrow \Hom_R(S_2,R) \rightarrow \Hom_R(e_2 R,R) \rightarrow \Hom_R(S_3,R) \rightarrow \Ext_R^1(S_2,R) \rightarrow 0.$$
Now the left $R$-module $\Hom_R(S_2,R)$ has dimension vector $[1,0,0,0]$, the left $R$-module $\Hom_R(e_2 R,R)$ has dimension vector $[1,1,0,0]$ and the left $R$-module $\Hom_R(S_3,R)$ has dimension vector $[1,1,0,0]$.
This implies that the left $R$-module $\Ext_R^2(N,R) \cong \Ext_R^1(S_2,R)$ has dimension vector $[1,0,0,0]$ and thus is a non-zero projective left $R$-module. \newline
(2) Let $M$ be a general $R$-module such that $M$ is 1-torsionfree (meaning it is a submodule of a projective module), then $M$ is 2-reflexive if and only if the $R$-module $\Ext_R^2(\Tr M,R)$ is projective by the proposition on page 143 in \cite{AB}.
Now the $R$-module $S_3$ is 1-torsionfree since it is a submodule of the indecomposable projective $R$-module $e_2 R$.
Now we calculate $\Ext_R^2(\Tr S_3,R)$ and show that this module is projective to conclude that $S_3$ is 2-reflexive.
A minimal projective resolution of $S_3$ is given by
$$0 \rightarrow e_4 R \rightarrow e_3 R \rightarrow S_3 \rightarrow 0$$ and applying the functor $\Hom_R(-,R)$ to this exact sequence gives 
$$0 \rightarrow \Hom_R(S_3,R) \rightarrow \Hom_R(e_3 R,R) \rightarrow \Hom_R(e_4 R,R) \rightarrow \Tr(S_3) \rightarrow 0.$$
Now $\Hom_R(S_3,R) $ has dimension vector $[1,1,0,0]$, $\Hom_R(e_3 R,R)$ has dimension vector $[1,1,1,0]$ and $\Hom_R(e_4 R,R) $ has dimension vector $[ 0, 0, 1, 1 ]$. Thus $\Tr(S_3)$ has dimension vector $[0,0,0,1]$ and $\Omega^1(\Tr(S_3))$ is the left simple $R$-module $Y$ with dimension vector $[0,0,1,0]$.
We have the exact sequence of left $R$-modules:
$$0 \rightarrow Y_2 \rightarrow R e_3 \rightarrow Y \rightarrow 0,$$
where $Y_2$ is the left $R$-module with dimension vector $[ 1, 1, 0, 0 ]$. Applying the functor $\Hom_R(-,R)$ to this exact sequence gives the exact sequence:
$$ 0 \rightarrow \Hom_R(Y,R) \rightarrow \Hom_R(R e_3 ,R) \rightarrow \Hom_R(Y_2,R) \rightarrow \Ext_R^1(Y,R) \rightarrow 0.$$
Now $\Hom_R(Y,R)$ has dimension vector $[0,0,0,1]$, $\Hom_R(Re_3,R)$ has dimension vector $[ 0, 0, 1, 1 ]$ and $\Hom_R(Y_2,R)$ has dimension vector $[1,1,2,0]$.
Thus the $R$-module $\Ext_R^1(Y,R)$ has dimension vector $[ 1, 1, 1, 0 ]$.
Now the module $\Hom_R(Y_2,R)$ with dimension vector $[1,1,2,0]$ is isomorphic to the $R$-module $S_3 \oplus e_1 R$ and $\Ext_R^1(Y,R)$ being a quotient of $\Hom_R(Y_2,R) \cong S_3 \oplus e_1 R$ with dimension vector $[1,1,1,0]$ implies that $\Ext_R^1(Y,R) \cong e_1 R$ is projective. Thus $\Ext_R^2(\Tr (S_3),R) \cong \Ext_R^1(Y,R)$ is projective and $S_3$ is 2-reflexive.
What is left to show is that $S_3$ is not reflexive. But $S_3$ being reflexive is equivalent to $\Ext_R^i(\Tr (S_3),R)=0$ for $i=1,2$, see for example \cite[corollary 3.3 in chapter IV.]{ARS}.
But we just saw that $\Ext_R^2(\Tr (S_3),R)$ is non-zero and thus $S_3$ can not be reflexive.

\end{proof}

\section{Acknowledgement}
The algebra and the modules $N$ and $S_3$ in Proposition \ref{mainresult} were found and verified using the GAP-package \cite{QPA}.

\end{document}